\theoremstyle{plain}
\newtheorem{Thm}{Theorem}[section]
\newtheorem{Prop}[Thm]{Proposition}
\theoremstyle{definition}
\newtheorem{Expl}[Thm]{Example}
\newtheorem{Rem}[Thm]{Remark}
\numberwithin{equation}{section}
\title{Deformations over non-commutative base}
\author{Yujiro Kawamata \\
Dedicated to the Memory of Professor Jean-Pierre Demailly}
\begin{document}
\maketitle

\tableofcontents

\begin{abstract}
We make some remarks on deformation theory over non-commutative base.
We describe the base algebra of semi-universal non-commutative deformations using vector spaces $T^1$ and $T^2$.

14D15, 32G05.
\end{abstract}

We will consider deformation theory over non-commutative (NC) base algebras.
Such a theory is interesting because there are more deformations than the usual deformations over commutative bases.
The deformations over commutative base can possibly be regarded as the \lq first order' approximation 
of more general \lq higher order' deformations.
The formal theories of deformations over commutative and non-commutative bases 
are parallel and the extension to the non-commutative case is simple,
but some new phenomena and invariants appear.

We make some remarks on NC deformations.
The first remark is that the deformations over NC base is natural.
This is because the differential graded algebras (DGA) 
which govern the deformations of sheaves are naturally non-commutative.
Hence it is natural to consider deformations parametrized by NC base algebras.
We will also consider the problem of convergence of formal NC deformations and the moduli space.
The second remark is that we obtain \lq higher order invariants' because there are more NC deformations than
commutative ones by slightly generalizing results of \cite{Toda} and \cite{Hua-Toda}.
The last remark is that a description of the base algebra using the tangent space $T^1$ and 
the obstruction space $T^2$ is possible.

We use the abbreviation NC for \lq\lq not necessarily commutative''.
In \S 1, we recall the definition of NC deformations, and explain how the base algebra of semi-universal NC deformations
is described by a minimal $A^{\infty}$-algebra arising from DGA in the case of deformations of coherent sheaves.
In \S2, we consider the problem of convergence and the existence of moduli space by taking an example 
of deformations of linear subspaces in a linear space.
In \S 3, we consider another example of flopping contractions of $3$-dimensional manifolds, and show how
invariants appear beyond those obtained by commutative deformations.
We will give a description of the base algebra of the semi-universal NC deformation by using the tangent space and the 
obstruction space in \S 4.

The author would like to thank Jungkai Alfred Chen and NCTS of National Taiwan University where 
the work was partly done while the author visited there.
He would also like to thank the referee for the careful reading and suggestions for improvements. 
This work is partly supported by JSPS Kakenhi 21H00970.

\section{multi-pointed non-commutative deformations}

We recall the non-commutative deformation theory developed by \cite{Laudal} (see also \cite{ELS}, \cite{multi}).
We use NC as \lq\lq not necessarily commutative''. 
This is a generalization of the formal commutative deformation theory of \cite{Schlessinger} to the case 
where the base algebras are allowed to be NC.

Let $k^r$ be the direct product ring of a field $k$, and let $(\text{Art}_r)$ be the category of {\em augmented 
associative} $k^r$-algebras $R$
which are finite dimensional as $k$-modules and such that the two-sided ideal 
$M = \text{Ker}(R \to k^r)$ is nilpotent.
We assume that the composition of the structure homomorphisms $k^r \to R \to k^r$ is the identity.
$(\text{Art}_r)$ is the category of the base spaces for {\em $r$-pointed} NC deformations.

Let $k_i \cong k$ be the $i$-th direct factor of the product ring $k^r$ for $1 \le i \le r$.
$k_i$ is generated by $e_i = (0,\dots,1,\dots, 0) \in k^r$, where $1$ is placed at the $i$-th entry.
A left $k^r$-module $F$ has a direct sum decomposition $F = \bigoplus_{i=1}^r F_i$ as $k$-modules by $F_i = e_iF$, and 
$k^r$-bimodule has a further decomposition $F = \bigoplus_{i,j=1}^r F_{ij}$ by $F_{ij} = e_iFe_j$.

$R \in (\text{Art}_r)$ is an NC Artin semi-local algebra with maximal two-sided ideals $M_i = \text{Ker}(R \to k_i)$.
NC deformation is {\em multi-pointed} because an NC semi-local algebra is not necessarily a direct product of local algebras
unlike the case of a commutative algebra.

The model case is a deformation of a direct sum of coherent sheaves $F = \bigoplus_{i=1}^r F_i$ ($r$-pointed sheaf). 
The sheaves $F_i$ interact each other and there are more NC deformations of $F$ than those of the individual 
sheaves $F_i$.

Let $F$ be something defined over $k^r$ which will be deformed over $R \in (\text{Art}_r)$.
An {\em NC deformation} of $F$ over $R$ is a pair $(\tilde F, \phi)$ 
where $\tilde F$ is \lq\lq flat'' over $R$ 
and $\phi: F \to R/M \otimes_R \tilde F$ is an isomorphism.
The definition depends on the cases what kind of $F$ we are considering.
The set of isomorphism classes of deformations of $F$ over $R$ gives an {\em NC deformation functor} 
$\Phi = \text{Def}_F: (\text{Art}_r) \to (\text{Set})$.

More concretely, an {\em $r$-pointed NC deformation functor} $\Phi: (\text{Art}_r) \to (\text{Set})$ in this paper is
a covariant functor which satisfies 
the conditions $(H_0), (H_f), (H_e), (\tilde H)$ stated below 
following \cite{Schlessinger} (see also \cite{Sernesi} Chapter 2).

\vskip 1pc

We define an object $R_e \in (\text{Art}_r)$ as a generalization of the ring of dual numbers
$k[\epsilon]/(\epsilon^2)$.
Let $R_e$ be the trivial extension $k^r \oplus \text{End}(k^r)$, 
where $\text{End}(k^r)$ is a square zero two-sided ideal, and the 
multiplication of $k^r$ and $\text{End}(k^r)$ is induced from the embedding to diagonal matrices $k^r \to \text{End}(k^r)$.
As a $k$-module, 
\[
R_e = k^r \oplus \bigoplus_{i,j=1}^r ke_{ij}.
\]
The multiplication is defined by $e_ie_{jk} = \delta_{ij}e_{jk}$, $e_{ij}e_k = \delta_{jk}e_{ij}$ 
and $e_{ij}e_{kl} = 0$
for all $i,j,k,l$.
The augmentation $R_e \to k^r$ is given by $e_{ij} \mapsto 0$.

\vskip 1pc

Now we state the conditions $(H_0), (H_f), (H_e), (\tilde H)$.
For ring homomorphisms $R' \to R$ and $R'' \to R$ in $(\text{Art}_r)$, 
let $\alpha: \Phi(R' \times_R R'') \to \Phi(R') \times_{\Phi(R)} \Phi(R'')$ be the map naturally defined by $\Phi$.

\begin{itemize}

\item[$(H_0)$] $\Phi(k^r)$ consists of one element.

\item[$(H_f)$] $\Phi(R_e)$ is finite dimensional as a $k$-module.

\item[$(H_e)$] The natural map $\alpha$ is bijective if $R = k^r$ and $R'' = R_e$.

\item[$(\tilde H)$] The natural map $\alpha$ is surjective if $R'' \to R$ is surjective.

\end{itemize}

The {\em tangent space} $T^1$ of the functor $\Phi$ is defined by $T^1 = \Phi(R_e)$.
The $k^r$-bimodule structure of the ideal $\text{End}(k^r) \subset R_e$ induces 
a $k^r$-bimodule structure on $T^1$, so we can write $T^1 = \bigoplus_{i,j=1}^r T^1_{ij}$.
We have $T^1_{ij} = \Phi(k^r \oplus ke_{ij})$.
Indeed $T^1_{ij} = e_iT^1e_j = \Phi(e_iR_ee_j) = \Phi(k^r \oplus ke_{ij})$.

An element $\xi \in \Phi(R)$ for $R \in (\text{Art}_r)$ is called an {\em $r$-pointed NC deformation} over $R$ of
the unique element of $\Phi(k^r)$.

Let $T_R = (M/M^2)^*$ be the Zariski tangent space of $R$.
It is a $k^r$-bimodule.
The {\em Kodaira-Spencer map} $KS_{\xi}: T_R \to T^1$ associated to the deformation $\xi$ is defined
as follows.
A tangent vector $v \in (T_R)_{ij} = (M/M^2)^*_{ij}$ induces a ring homomorphism $v_*: R \to k^r \oplus ke_{ij}$, hence
$\Phi(v_*): \Phi(R) \to \Phi(k^r \oplus ke_{ij}) = T^1_{ij}$.
Then we define $KS_{\xi}(v) = \Phi(v_*)(\xi)$.

\vskip 1pc

Let $\hat R := \varprojlim R_i \in (\hat{\text{Art}}_r)$ be a pro-object of $(\text{Art}_r)$, and let 
$\hat{\xi} := \varprojlim \xi_i \in \hat{\Phi}(\hat R) := \varprojlim \Phi(R_i)$ be an element of a projective limit.
Then $\hat{\xi}$ is called a {\em formal $r$-pointed NC deformation} over $\hat R$.
The Kodaira-Spencer map $KS_{\hat{\xi}}: T_{\hat R} \to T^1$ is similarly defined.

A formal deformation $\hat{\xi} \in \hat{\Phi}(\hat R)$ is called a {\em versal} NC deformation if the following holds:
for any NC deformation $\xi' \in \Phi(R')$, there exists a morphism 
$h: \hat R \to R'$ such that $\xi' = \hat{\Phi}(h)(\hat{\xi})$.

In this case, the Kodaira-Spencer map $KS_{\hat{\xi}}: T_{\hat R} \to T^1$ is surjective.
Indeed, let $v' \in T^1_{ij} = \Phi(k^r \oplus ke_{ij})$ be any element.
Then there is a morphism $h: \hat R \to k^r \oplus ke_{ij}$ such that $v' = \hat{\Phi}(h)(\hat{\xi})$.
Let $v: (\hat M/\hat M^2)_{ij} \to ke_{ij}$ be the homomorphism induced from $h$.
Then $v_* = h$ and $KS_{\hat{\xi}}(v) = v'$. 

A versal NC deformation is said to be {\em semi-universal} if the Kodaira-Spencer map is bijective.
In this case, we have $\hat M/\hat M^2 \cong (T^1)^*$.
We note that it is called \lq\lq versal'' in some literatures.
The existence of the semi-universal NC deformation is proved in a similar way to \cite{Schlessinger}
from the conditions $(H_0), (H_f), (H_e), (\tilde H)$.

In the case $r = 1$, if we take the abelianization $\hat R^{ab} = \hat R/[\hat R, \hat R]$ of the base ring of the 
semi-universal deformation, then we 
obtain a usual semi-universal commutative deformation $\hat{\xi}^{ab}$ over $\hat R^{ab}$ given by 
$\hat{\xi}^{ab} = \Phi(q)(\hat{\xi})$, where $q: \hat R \to \hat R^{ab}$ is the quotient map.

\vskip 1pc

We recall a description of the semi-universal NC deformation in the case of deformations of a coherent sheaf 
using an $A^{\infty}$-algebra formalism (\cite{formal}).
Let $X$ be an algebraic variety over $k$ and 
let $F = \bigoplus_{i=1}^r F_i$ be a coherent sheaf with proper support.
Then the infinitesimal deformations of $F$ are controlled by
a {\em differential graded algebra (DGA)} $RHom_X(F, F)$.
The tangent space and the obstruction space are given 
by $k^r$-bimodules $T^i = \text{Ext}_X^i(F, F)$ for $i = 1,2$ (cf. \S 4).

It is also controlled by an $A^{\infty}$-algebra structure
$\{m_d\}_{d \ge 2}$ of the cohomology group 
$A = \bigoplus_{p \ge 0} A_p := \bigoplus_{p \ge 0} \text{Ext}^p(F,F) = \bigoplus_{p,i,j} \text{Ext}^p(F_i,F_j)$; 
\[
m_d: T^d_{k^r}A := A \otimes_{k^r} \dots \otimes_{k^r} A \to A(2-d)
\]
are the higher multiplications of degree $2-d$, where the left hand side is a tensor product with $d$ factors over $k^r$ and the 
right hand side has degree shift $2-d$.
In particular, we have 
\[
m_d: T^d_{k^r}A_1 := A_1 \otimes_{k^r} \dots \otimes_{k^r} A_1 \to A_2
\]
for $d \ge 2$.

In general, for a $k^r$-bimodule $E$, we have $E = \bigoplus_{i,j=1}^r E_{ij}$ with $E_{ij} = e_iEe_j$.
We define a completed tensor algebra $\hat T_{k^r}E = \prod_{d \ge 0} T^d_{k^r}E$ by
\[
T^d_{k^r} E = E \otimes_{k^r} E \otimes_{k^r} \dots \otimes_{k^r} E
\]
where there are $d$-times $E$ on the right hand side.
We apply this construction to $E = (T^1)^*$.
If $\{x_{ij}^s\}_s$ is a basis of $E_{ij}$, then we have 
\[
\hat T_{k^r}E = k^r\langle\langle x_{ij}^s \rangle\rangle/(e_ix_{i'j}^s, x_{ij}^se_{j'}, x_{i'j'}^sx_{i''j''}^{s'} 
\mid i \ne i', j \ne j', j' \ne i'').
\]
Thus the set of monomials
\[
x_{i_0i_1}^{s_1}x_{i_1i_2}^{s_2}\dots x_{i_{d-1}i_d}^{s_d} 
\]
with $i = i_0$ and $j = i_d$ is a $k$-basis of $(\hat T_{k^r}E)_{ij}$.

Let 
\[
m^* = \sum_{d \ge 2} m_d^*: \text{Ext}^2(F,F)^* \to \hat T_{k^r}(\text{Ext}^1(F,F)^*)
\]
be the formal sum of dual maps of $m_d$.
Then the base algebra $\hat R$ of the semi-universal NC deformation $\hat F$ is determined as an augmented $k^r$-algebra 
to be
\[
\hat R = \hat T_{k^r}(\text{Ext}^1(X,X)^*)/(m^*(\text{Ext}^2(X,X)^*))
\]
(\cite{formal}).
Thus the Taylor coefficients of the equations of the formal NC moduli space are determined by $A^{\infty}$-multiplications.

\vskip 1pc

There is another way of describing a semi-universal $r$-pointed NC deformation of a direct sum 
of coherent sheaves with proper support $F = \bigoplus_{i=1}^r F_i$.
The semi-universal NC deformation $\hat F$ of $F$ is given by a tower $\{F^{(n)}\}$ of universal extensions (cf. \cite{multi}):
\[
0 \to \text{Ext}^1(F^{(n)}, F)^* \otimes_{k^r} F \to F^{(n+1)} \to F^{(n)} \to 0
\]
with $F^{(0)} = F$ and $\hat F = \varprojlim F^{(n)}$.
We have direct sum decompositions $F^{(n)} = \bigoplus_i F^{(n)}_i$, and we can write
\[
0 \to \bigoplus_{i,j} \text{Ext}^1(F^{(n)}_i, F_j)^* \otimes_k F_j \to \bigoplus_i F^{(n+1)}_i \to \bigoplus_i F^{(n)}_i \to 0.
\]

If $\text{End}(F) \cong k^r$, i.e., if $\text{End}(F_i) \cong k$ and $\text{Hom}(F_i,F_j) \cong 0$ for $i \ne j$, then 
$F$ is called a {\em simple collection} (\cite{multi}).
The deformation theory of a simple collection is particularly nice.
In this case, $F^{(n)}$ is flat over $R^{(n)} = \text{End}(F^{(n)})$, and 
the parameter algebra $\hat R$ of the semi-universal deformation $\hat F$ is given by
$\hat R = \varprojlim R^{(n)}$ (\cite{multi} Theorem 4.8).

\begin{Rem}
(1) We do not consider deformation theory of {\em varieties} over non-commutative base in this paper,
because such a theory seems to be difficult by the following reason.
Suppose that there is an infinitesimal deformation $X_R$ of a variety $X$ over an NC ring $R$.
Then the structure sheaf $\mathcal O_{X_R}$ should be NC too.
When we consider a base change over a ring homomorphism $R \to R'$, 
it seems necessary that the base rings should be commutative
in order for the tensor product $\mathcal O_{X_R} \otimes_R R'$ to have a ring structure.
Indeed the DGLie algebra which controls the deformations of $X$ is NC but its non-commutativity is restricted.

But when $X$ is a subvariety of an ambient variety $Y$, then we can consider a deformation of $X$ inside $Y$ over an NC base
as a deformation of the structure sheaf $\mathcal O_X$ as a sheaf on $Y$ (see \S 2).

(2) The deformation functor is pro-representable when there is a universal deformation.
But a universal deformation does not exist in general (see \cite{multi} Remark 4.10).
\end{Rem}

\section{convergence and moduli}

The above described semi-universal NC deformation is a formal deformation, and the question on the convergence 
is important.
We will make some remarks on the convergence of the formal NC deformations and the relationship with
the moduli space of commutative deformations. 
We consider only $1$-pointed NC deformations, and 
we take an example of the moduli space of linear subspaces in a fixed linear space.
We consider NC deformations of the structure sheaves of linear subspaces.

We would like to say that the formal semi-universal NC deformation is convergent 
if the corresponding semi-universal commutative deformation is convergent.
This is because the numbers of commutative monomials and non-commutative ones on $n$ variables of degree $d$
grow similarly to $n^d$. 
Maybe we should require that the growth of the Taylor coefficients of the non-commutative power series 
are bounded in a similar way as the commutative power series. 

Any $k$-algebra homomorphism $R \to k$ for any associative $k$-algebra $R$
factors through the abelianization $R \to R^{ab}$.
Therefore we can think that the set of closed points of the moduli spaces are the same 
for commutative and NC deformation problems. 
In other words, when we observe points, then the moduli space of NC deformations
is reduced to the usual moduli space.
We can say that the NC deformations give an additional infinitesimal or 
formal structure at each point of the commutative moduli space.
And the formal structure is usually convergent.
However, a compactification is another problem, and it seems that it does not exists.

\vskip 1pc

As an example, we consider NC deformations of linear subspaces in a finite dimensional vector space.
As explained in Remark 1.1, we consider the NC deformations of the structure sheaf of the subspace
instead of the subspace as a variety.
The following is a slight generalization of \cite{formal} Example 7.8.
The commutative deformations are unobstructed and yield a compact moduli space, a Grassmann variety.
But we will see that NC deformations are obstructed.

Let $V \cong k^n$ be an $n$-dimensional linear space with coordinate linear functions $x_1,\dots,x_n$, and
let $W$ be an $m$-dimensional linear subspace defined by an ideal $I = (x_{m+1}, \dots, x_n)$.
The commutative moduli space $G(m,n)$ has an affine open subset $Hom(W, V/W) \cong k^{m(n-m)}$ 
with coordinates $a_{i,j}$ ($1 \le i \le m$, $m+1 \le j \le n$).
We consider NC deformations of $W$ as a linear subspace of $V$, i.e., the NC deformations of 
the ideal sheaves generated by linear functions.

\begin{Prop}
Let $V \cong k^n$ with coordinate linear functions $x_1,\dots,x_n$, 
and let $W \cong k^m$ be defined by $x_{m+1} = \dots = x_n = 0$.
Then the formal semi-universal NC deformation of $W$ as a linear subspace of $V$ has the parameter algebra $\hat R$ 
and the ideal $\hat I$ given as follows:
\[
\begin{split}
&\hat R = k\langle \langle a_{ij} \mid 1 \le i \le m < j \le n \rangle \rangle/\hat J \\
&\hat J = (a_{ij_1}a_{ij_2} - a_{ij_2}a_{ij_1}, \,\,\,
a_{i_1j_1}a_{i_2j_2} - a_{i_2j_2}a_{i_1j_1} + a_{i_1j_2}a_{i_2j_1} - a_{i_2j_1}a_{i_1j_2} \\
&\mid 0 \le i \le m, 1 \le i_1 < i_2 \le m < j_1 < j_2 \le n) \\
&\hat I = (x_j + \sum_{i=1}^m a_{ij}x_i \mid m+1 \le j \le n).
\end{split}
\]
\end{Prop}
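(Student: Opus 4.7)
The plan is to construct the semi-universal family directly from the shape of a general linear NC deformation and then to extract the relations defining $\hat J$ from the flatness condition.

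Any NC deformation of $W$ as a linear subspace over a test algebra $R\in(\mathrm{Art}_1)$ is cut out inside $\tilde S:=R\otimes_k k[x_1,\dots,x_n]$, with the polynomial ring $k[x_1,\dots,x_n]$ central in $\tilde S$. After an $R$-invertible change of the $n-m$ linear generators, the defining ideal takes the normalized form $\tilde I=(\tilde x_{m+1},\dots,\tilde x_n)$ with $\tilde x_j=x_j+\sum_{i=1}^m a_{ij}x_i$, $a_{ij}\in M_R$. The $m(n-m)$ parameters $a_{ij}$ form a basis of the tangent space $T^1$, so $\hat R$ will be a quotient of $k\langle\langle a_{ij}\rangle\rangle$ and $\hat I$ is already of the displayed form; what remains is to identify the relations.

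The crucial input is that $\tilde S/\tilde I$ is a flat right $R$-module deformation of $\mathcal O_W$ if and only if the Koszul complex on $\tilde x_{m+1},\dots,\tilde x_n$ is a resolution of $\tilde S/\tilde I$, and since $k[x_1,\dots,x_n]$ is central in $\tilde S$ this amounts to the Koszul differential squaring to zero, i.e.\ $[\tilde x_{j_1},\tilde x_{j_2}]=0$ in $\tilde S$ for all $m<j_1<j_2\le n$. A direct expansion using the centrality of the $x_i$ gives
\[
[\tilde x_{j_1},\tilde x_{j_2}]\;=\;\sum_{p,q=1}^m\bigl(a_{p,j_1}a_{q,j_2}-a_{p,j_2}a_{q,j_1}\bigr)\,x_p x_q.
\]
Since the commutative monomials $x_i^2$ and $x_{i_1}x_{i_2}$ ($i_1<i_2$) are $R$-linearly independent in $R\otimes_k k[x_1,\dots,x_m]$, the vanishing of this polynomial is equivalent to each of its coefficients vanishing in $R$: the diagonal terms produce the commutator $[a_{i,j_1},a_{i,j_2}]$, and the off-diagonal terms produce a sum of two commutators. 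These are precisely the two families of generators of $\hat J$.

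With these relations imposed, $\hat R:=k\langle\langle a_{ij}\rangle\rangle/\hat J$ supports an honestly flat formal family cut out by $\hat I$ via the now-closed Koszul resolution. Semi-universality follows by comparing $\hat R$ with the abstract semi-universal algebra $\hat R^{\mathrm{su}}$ produced by the formalism of Section 1: both are quotients of $k\langle\langle a_{ij}\rangle\rangle$ (the $a_{ij}$ being dual tangent coordinates), the explicit family over $\hat R$ produces a map $\hat R^{\mathrm{su}}\to\hat R$ by versality, and the necessity of the relations $\hat J$ on any flat deformation provides an inverse, so the two formal parameter rings coincide. The one substantive step is the flatness criterion used above — the equivalence between $R$-flatness of $\tilde S/\tilde I$ and the vanishing of the generator commutators in $\tilde S$ in the NC setting; once this is in place the commutator expansion is elementary and the comparison of formal parameter rings is standard.
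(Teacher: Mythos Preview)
Your derivation of the relations is essentially the same as the paper's: both compute, in one form or another, the consequence of $x_{j_1}x_{j_2}=x_{j_2}x_{j_1}$ after substituting $x_j\equiv -\sum_i a_{ij}x_i$, and read off the coefficients of $x_i^2$ and $x_{i_1}x_{i_2}$.

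Where you genuinely diverge is in proving that these relations are \emph{all} of $\hat J$. The paper projectivizes, sets $F=\mathcal O_Y$ on $X=\mathbf P(V^*)$, and computes
\[
\dim T^2=\dim\mathrm{Ext}^2(F,F)=\dim H^0\!\bigl(Y,\textstyle\bigwedge^2 N_{Y/X}\bigr)=\binom{m+1}{2}\binom{n-m}{2},
\]
then simply counts that the number of linearly independent quadratic relations found equals $\dim T^2$; since by the general theory the semi-universal base is cut out by at most $\dim T^2$ relations, nothing further can appear. You instead argue sufficiency constructively: once the commutators $[\tilde x_{j_1},\tilde x_{j_2}]$ vanish, the Koszul complex on the $\tilde x_j$ is an honest complex whose reduction mod $M_R$ is the usual Koszul resolution of $\mathcal O_W$, so (degree by degree, over the Artinian base) it is exact and exhibits $\tilde S/\tilde I$ as $R$-flat; comparing with the abstract hull then finishes. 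Your route avoids computing $\mathrm{Ext}^2$ and stays entirely in the affine picture, at the cost of the flatness criterion you flag as ``the one substantive step''; the paper's route is shorter once the obstruction-space machinery of \S1 and \S4 is in hand.

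One small point of presentation: your ``crucial input'' is stated as a biconditional (flat $\iff$ Koszul is a resolution $\iff$ $d^2=0$), but the implication \emph{flat $\Rightarrow$ commutators vanish} does not literally go through ``Koszul is a resolution'' (the Koszul complex is not even a complex until the commutators vanish). The correct argument for that direction is the paper's: $[\tilde x_{j_1},\tilde x_{j_2}]$ lies in the left ideal $\tilde I$, hence is zero in $\tilde S/\tilde I$; flatness identifies $\tilde S/\tilde I$ with $R\otimes_k k[x_1,\dots,x_m]$ as an $R$-module, and there the monomials $x_px_q$ are $R$-independent, forcing the coefficients to vanish in $R$. With this adjustment your argument is complete.
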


\begin{proof}
This is almost the same as \cite{formal} Example 7.8.
Let $Y = \mathbf P(W^*) \subset X = \mathbf P(V^*)$ be the corresponding projective spaces.
We consider NC deformations of a coherent sheaf $F = \mathcal O_Y$ on $X$.
The normal bundle of $Y$ in $X$ is given by $N_{Y/X} \cong \mathcal O_Y(1)^{\oplus n-m}$.
Hence $T^1 = \text{Ext}^1(F,F) \cong H^0(Y,N_{Y/X}) \cong k^{\oplus m(n-m)}$ and
$T^2 = \text{Ext}^2(F,F) \cong H^0(Y,\bigwedge^2 N_{Y/X}) \cong k^{\oplus \binom{m+1}2 \binom{n-m}2}$. 

Let $I' = \mathcal O_X(-Y)$ be the ideal sheaf of $Y \subset X$ generated by the homogeneous coordinates 
$x_{m+1}, \dots, x_n$.
By \cite{formal} Lemma 7.6, the semi-universal NC deformation of $F$ is given in the form 
\[
\hat F = \varprojlim (R_n \otimes \mathcal O_X)/I'_n
\]
where $(R_n, M_n) \in (\text{Art}_1)$ such that $M_n^{n+1} = 0$.
By the flatness, the ideal sheaf $I'_n$ is generated by linear forms 
$x_j + \sum_{i=1}^m a_{ij}x_i$
for $m+1 \le j \le n$, where $a_{ij} \in M_n$.

Since the $x_i$ are commutative variables in $R_n \otimes \mathcal O_X$, we have $x_jx_l = x_lx_j$ for 
$m+1 \le j,l \le n$.
Hence equalities
\[
\sum_{i,k=1}^m a_{ij}a_{kl}x_ix_k = \sum_{i,k=1}^m a_{kl}a_{ij}x_ix_k
\]
hold in $F_n = (R_n \otimes \mathcal O_X)/I'_n$ for such $j,l$. 
It follows that
\[
\begin{split}
&a_{ij}a_{il} - a_{il}a_{ij} = 0 \quad (1 \le i \le m < j < l \le n), \\
&a_{ij}a_{kl} - a_{kl}a_{ij} + a_{kj}a_{il} - a_{il}a_{kj} = 0 \quad (1 \le i < k \le m < j < l \le n)
\end{split}
\]
in $\hat R = \varprojlim R_n$.
The above relations are non-commutative polynomials which are linearly independent quadratic forms, and 
their number is equal to 
\[
m \binom {n-m}2 + \binom m2 \binom {n-m}2 = \binom{m+1}2 \binom{n-m}2. 
\]
This is equal to the dimension of the obstruction space.
Therefore there are no more independent relations contained in $\hat J$.
\end{proof}

The above deformation is \lq\lq algebraizable''.
There is an NC deformation of ideals $\tilde I$ over a parameter algebra $\tilde R$ which is a 
quotient algebra of an NC polynomial algebra:
\[
\begin{split}
&\tilde R = k\langle a_{ij} \mid 1 \le i \le m < j \le n \rangle/\tilde J \\
&\tilde J = (a_{ij_1}a_{ij_2} - a_{ij_2}a_{ij_1}, \,\,\, a_{i_1j_1}a_{i_2j_2} - a_{i_2j_2}a_{i_1j_1} 
+ a_{i_1j_2}a_{i_2j_1} - a_{i_2j_1}a_{i_1j_2} \\
&\mid 1 \le i \le m, 1 \le i_1 < i_2 \le m < j_1 < j_2 \le n) \\
&\tilde I = (x_j + \sum_{i=1}^m a_{ij}x_i \mid m+1 \le j \le n)
\end{split}
\]
The meaning of this formula is that it induces a semi-universal NC deformation at every closed point of an affine open subset 
$\text{Spec}(\tilde R^{ab}) \subset G(m+1,n+1)$
with $\tilde R^{ab} = k[a_{ij} \mid 0 \le i \le m < j \le n]$.
Indeed we have 
\[
(a_{ij}-a_{ij}^0)(b_{kl}-b_{kl}^0) - (b_{kl}-b_{kl}^0)(a_{ij}-a_{ij}^0) = a_{ij}b_{kl} - b_{kl}a_{ij}
\]
for NC variables $a_{ij}, b_{kl}$ and $a_{ij}^0,b_{kl}^0 \in k$.

Hilbert schemes and Quot schemes are constructed from Grassmann varieties.
We wonder if their NC deformations are also semi-globalizable.

\begin{Expl}
(1) $n = 3$ and $m = 1$.
We have $G(1,3) \cong \mathbf P^2$.
Then $\tilde R \cong k\langle a,b \rangle/(ab-ba) = k[a,b]$.

(2) $n = 3$ and $m = 2$.
We have $G(2,3) \cong \mathbf P^2$.
Then $\tilde R = k\langle a,b \rangle$ is not Noetherian.
Indeed a two-sided ideal $(ab^ka \mid k > 0)$ is not finitely generated.

$\tilde R$ has a following quotient algebra, which corresponds to an NC deformation which is not semi-universal:
\[
R_{\epsilon} = k\langle a,b \rangle/(ab-ba-\epsilon)
\]
where $\epsilon \in k$.
For example, if $\epsilon = 1$, then $R_1 \cong k[t,d/dt]$.

(3) $n = 4$ and $m = 2$.
We have $G(2,4)$.
Then we have 
\[
\tilde R = k\langle a,b,c,d \rangle/(ab-ba,\,\,cd-dc,\,\,ad-da-bc+cb).
\] 
$\tilde R$ has a following quotient algebra:
\[
R_{\epsilon_1, \epsilon_2} = k\langle a,b,c,d \rangle/(ab-ba,\,\,cd-dc,\,\,ad-da-1,\,\,bc-cb-1, \,\,ac-ca-\epsilon_1, \,\,bd-db-\epsilon_2)
\]
where $\epsilon_i \in k$.
For example, if $\epsilon_i = 0$, then $R_1 \cong k[t_1,t_2,\partial/\partial t_1, \partial/\partial t_2]$.
\end{Expl}

\section{flopping contractions of $3$-folds}

As a typical example of multi-pointed NC deformations, 
we will consider NC deformations of exceptional curves of a flopping contraction from 
a smooth 3-fold $f: Y \to X$ over $k = \mathbf C$.
\cite{DW} observed that there are 
more NC deformations than commutative ones, and the base algebra of NC deformations 
gives an important invariant of the flopping contraction called the contraction algebra.
Indeed Donovan and Wemyss conjectured that the contraction algebra, which is a finite dimensional associative algebra, 
determines the complex analytic type of the singularity of $X$.
\cite{Toda} and \cite{Hua-Toda} proved that the dimension count of the contraction algebra yields 
Gopakumar-Vafa invariants of rational curves defined in \cite{Katz}.
We will consider slight generalizations where there are more than one exceptional curves.

Let $f: Y \to X = \text{Spec}(B)$ be a projective birational morphism defined over $k = \mathbf C$ 
from a smooth $3$-dimensional variety $Y$ whose exceptional locus $C$ is $1$-dimensional.
Let $C = \bigcup_{i=1}^r C_i$ be a decomposition into irreducible components.
We assume that $f$ is crepant, i.e., $(K_Y,C_i) = 0$ for all $i$.
It is known that $C_i \cong \mathbf P^1$, the dual graph of the $C_i$ is a tree, and 
$X$ has only isolated hypersurface singularities of multiplicity $2$.

The {\em contraction algebra} $R$ for $f$ is defined to be the base algebra of the semi-universal $r$-pointed NC 
deformation of the sheaf 
$F = \bigoplus_{i=1}^r \mathcal O_{C_i}(-1)$. 

\vskip 1pc

We consider commutative one parameter deformation of the contraction morphism $f: Y \to X$, and 
investigate the behavior of the contraction algebras under deformation.
Let $p: \mathcal X \to \Delta$ be a one parameter flat deformation of $X$ over a disk $\Delta$, and assume that there is 
a flat deformation $\tilde f: \mathcal Y \to \mathcal X$ of the flopping contraction $f: Y \to X$.
We assume that there are Cartier divisors $\mathcal L_1,\dots, \mathcal L_r$ on $\mathcal Y$ 
such that $(\mathcal L_i, C_j) = \delta_{i,j}$.
This is always achieved when we replace $X$ by its complex analytic germ containing $f(C)$ and 
$\Delta$ by a smaller disk.

Let $C^t = \bigcup_{j=1}^{s_t} C^t_j$ be the exceptional curves with decomposition to irreducible components 
for the flopping contraction $f_t: Y_t \to X_t$ for $t \ne 0$, where $Y_t = (p\tilde f)^{-1}(t)$ and $X_t = p^{-1}(t)$.
It is not necessarily connected even if $C$ is connected.
We may assume that $s = s_t$ is constant on $t \ne 0$.
We define integers $m_{j,i}$ by the degeneration of $1$-cycles $C^t_j \to \sum m_{j,i}C_i$ when $t \to 0$.
This means that $\mathcal O_{C^t_j}$ degenerates in a flat family to $\mathcal O_{\sum_i m_{j,i}C_i}$. 
We have $(\mathcal L_i,C^t_j) = m_{j,i}$.

If the deformation $\tilde f$ is generic, then 
$C^t$ is a disjoint union of $(-1,-1)$-curves, i.e., smooth rational curves whose normal 
bundles are isomorphic to $\mathcal O_{\mathbf P^1}(-1)^{\oplus 2}$.
In this case, we denote 
\[
m_j = \sum_i m_{j,i}, \quad n_d = \# \{j \mid m_j = d\}.
\]
The numbers $n_d$ should be called the {\em Goparkumar-Vafa invariants} (\cite{Katz} for the case $r = 1$). 
In the case $r = 1$, \cite{Toda} proved that $n_1$ is equal to the dimension of the abelianization of the contraction algebra
$n_1 = \dim R^{ab}$, while higher terms $n_d$ for $d \ge 2$ contribute to $\dim R$ (see Theorem \ref{flop} (3)). 

\vskip 1pc

We consider NC deformations of $F = \bigoplus_{i=1}^r F_i$ for $F_i = \mathcal O_{C_i}(-1)$ on $Y$ and $\mathcal Y$.
The set $\{F_i\}$ is called a {\em simple collection} on $Y$ and $\mathcal Y$ in the terminology of \cite{multi}
in the sense that $\text{Hom}_Y(F,F) \cong \text{Hom}_{\mathcal Y}(F,F) \cong k^r$.
The NC deformations of a simple collection behave particularly nice.

Let $\hat{\Delta} = \text{Spec}(k[[t]])$ be the completion of $\Delta$ at the origin.
By the flat base change $\hat{\Delta} \to \Delta$, we define
$\hat{\mathcal X} = \mathcal X \times_{\Delta} \hat{\Delta}$ and
$\hat{\mathcal Y} = \mathcal Y \times_{\Delta} \hat{\Delta}$.
Let $\hat f: \hat{\mathcal Y} \to \hat{\mathcal X}$ and $\hat p: \hat{\mathcal X} \to \hat{\Delta}$ be natural morphisms.

Let $\hat{\mathcal F} = \bigoplus_{i=1}^r \mathcal F_i$ and $\tilde F^0 = \bigoplus_{i=1}^r \tilde F_i^0$ 
be the semi-universal NC deformations of $F$ on $\hat{\mathcal Y}$ and $Y$, respectively, and let 
$\hat{\mathcal R}$ and $R$ be the base algebras of these semi-universal deformations.
We note that $\tilde F^0$ is obtained by finite number of extensions of the $F_i$ while $\hat{\mathcal F}$ may not.
This is because $C$ is isolated in $Y$ while $C$ may move inside $\mathcal Y$.
Hence we have $\dim R < \infty$ as $k$-modules.
We will see that $\dim \hat{\mathcal R} = \infty$ (see Theorem \ref{flop} (1)).

$\hat{\mathcal F}$ is also a semi-universal NC deformation of $F$ on $\mathcal Y$.
We will see that there is also a \lq\lq convergent version'' $\mathcal F$ on $\mathcal Y$, and 
$\hat{\mathcal F}$ is its completion.

By \cite{multi} Theorem 4.8, the base algebras coincide with the endomorphism algebras:
\[
\hat{\mathcal R} = \text{End}_{\hat{\mathcal Y}}(\hat{\mathcal F}), \quad R = \text{End}_Y(\tilde F).
\]

$\hat{\mathcal F}$ and $\tilde F^0$ can be described explicitly in the following way
(\cite{DW}, \cite{multi}, \cite{perv}).
In particular, there exists a sheaf $\mathcal F$ on $\mathcal Y$ such that 
\begin{equation}\label{flat}
\hat{\mathcal F} \cong \mathcal F \otimes_{\mathcal O_{\mathcal Y}} \mathcal O_{\hat{\mathcal Y}}
\end{equation}
i.e., the semi-universal NC deformation $\hat{\mathcal F}$ is convergent when we replace $\Delta$ by a smaller disk if necessary.

\vskip 1pc

By \cite{VdB}, we construct extensions of locally free sheaves on $\mathcal Y$:
\[
0 \to \mathcal O_{\mathcal Y}^{s_i} \to M_i \to \mathcal L_i \to 0
\]
with some integers $s_i$ such that $R^1\tilde f_*M_i^* = 0$, where $M_i^*$ is the dual sheaf.
Let $M = \bigoplus_{i=1}^r M_i$ and $M^0 = M \otimes_{\mathcal O_{\mathcal Y}} \mathcal O_Y$.
We also denote $\hat M = M \otimes_{\mathcal O_{\mathcal Y}} \mathcal O_{\hat{\mathcal Y}}$.
We have an exact sequence
\[
0 \to M^* \to M^* \to (M^0)^* \to 0.
\]
Since the dimensions of fibers of $\tilde f$ are at most $1$, we obtain $R^1f_*(M^0)^* = 0$ from $R^1\tilde f_*M_i^* = 0$.
Then semi-universal NC deformations $\hat{\mathcal F} = \bigoplus \hat{\mathcal F}_i$ and $\tilde F^0$ are 
given as the kernels of natural homomorphisms 
(\cite{perv} Theorem 1.2):
\[
\begin{split}
&0 \to \hat{\mathcal F} \to \hat f^*\hat f_*\hat M \to \hat M \to 0, \\
&0 \to \tilde F^0 \to f^*f_*M^0 \to M^0 \to 0.
\end{split}
\]
We define $\mathcal F$ by an exact sequence 
\[
0 \to \mathcal F \to \tilde f^*\tilde f_*M \to M \to 0
\]
and let $\mathcal R = \text{End}_{\mathcal Y}(\mathcal F)$.
By the flat base change, we obtain (\ref{flat}) and 
\[
\hat{\mathcal R} \cong \mathcal R \otimes_{\mathcal O_{\mathcal Y}} \mathcal O_{\hat{\mathcal Y}}.
\]
We denote $\tilde F^t = \mathcal F \otimes_{\mathcal O_{\mathcal Y}} \mathcal O_{Y_t}$ 
and $R^t = \mathcal R \otimes_{\mathcal O_{\Delta}} k_t$, where $Y_t = (p\tilde f)^{-1}(t)$ and
$k_t$ is the residue field at $t \in \Delta$. 

The following is a slight generalization of results in \cite{Toda} and \cite{Hua-Toda}: 
 
\begin{Thm}\label{flop}
(1) $\mathcal F$ is flat over $\Delta$, and $\tilde F^0 = \mathcal F \otimes_{\mathcal O_{\mathcal Y}} \mathcal O_Y$.

(2) (\cite{Hua-Toda} Conjecture 4.3). 
$\mathcal R$ is a flat $\mathcal O_{\Delta}$-module, 
and $R \cong \mathcal R \otimes_{\mathcal O_{\Delta}} k$, 
where $k$ is the residue field of $\mathcal O_{\Delta}$ at $0$.

(3) Assume in addition that $C^t$ is a disjoint union of $(-1,-1)$-curves $C^t_j$ for $t \ne 0$.
Then 
\[
\begin{split}
&\tilde F^t \cong \bigoplus_j \mathcal O_{C^t_j}(-1)^{m_j}, \\ 
&R^t \cong \prod_j \text{Mat}(m_j \times m_j), \\ 
&\dim R = \sum_j m_j^2 = \sum_d n_d d^2. 
\end{split}
\]
\end{Thm}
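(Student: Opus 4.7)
The plan is to prove (1) and (2) by propagating $\Delta$-flatness through the constructions defining $\mathcal F$ and $\mathcal R$ via cohomology-and-base-change, and then to deduce (3) from the rigidity of simple sheaves on $(-1,-1)$-curves combined with a numerical comparison across the family. For (1), the starting point is that $M=\bigoplus_i M_i$ is locally free on $\mathcal Y$, hence flat over $\Delta$. The essential input is the vanishing $R^1\tilde f_*M=0$, which follows from the defining extension $0\to\mathcal O_{\mathcal Y}^{s_i}\to M_i\to\mathcal L_i\to 0$ together with $R^1\tilde f_*\mathcal O_{\mathcal Y}=0$ (rational singularities of the crepant $3$-fold contraction) and $R^1\tilde f_*\mathcal L_i=0$ ($\mathcal L_i$ is $\tilde f$-ample since $(\mathcal L_i,C_j)=\delta_{ij}$). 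Cohomology-and-base-change for the proper morphism $\tilde f$ then gives that $\tilde f_*M$ is $\Delta$-flat and that its formation commutes with restriction to each fiber, so $\tilde f^*\tilde f_*M$ is $\Delta$-flat as well. From $0\to\mathcal F\to\tilde f^*\tilde f_*M\to M\to 0$, since the cokernel $M$ is $\Delta$-flat, the kernel $\mathcal F$ is also $\Delta$-flat. Restricting the sequence to $Y=Y_0$ and comparing with the defining sequence of $\tilde F^0$ identifies $\mathcal F\otimes_{\mathcal O_{\mathcal Y}}\mathcal O_Y$ with $\tilde F^0$.

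For (2), I would apply cohomology-and-base-change to $\tilde f_*\mathcal{H}om_{\mathcal Y}(\mathcal F,\mathcal F)$ together with the relative version of the Van den Bergh/perverse tilting picture of \cite{VdB}, \cite{perv}. On every fiber $Y_t$, the restriction $\mathcal F|_{Y_t}$ is again of the form produced by that construction, and the simple-collection property of \cite{multi} gives $\text{Ext}^{\ge 1}_{Y_t}(\mathcal F|_{Y_t},\mathcal F|_{Y_t})=0$. This fibrewise vanishing, combined with base change for $\tilde f_*$, yields $\Delta$-flatness of $\mathcal R$ and, via (1) and \cite{multi} Theorem 4.8, the identification $R\cong\mathcal R\otimes_{\mathcal O_\Delta}k$.

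For (3), the $(-1,-1)$-hypothesis makes each simple rigid: $\text{Ext}^1(\mathcal O_{C^t_j}(-1),\mathcal O_{C^t_j}(-1))\cong H^0(N_{C^t_j/Y_t})=H^0(\mathcal O_{\mathbf P^1}(-1)^{\oplus 2})=0$, and since the $C^t_j$ are mutually disjoint the same vanishing holds for any finite direct sum. Consequently $\tilde F^t$, being the restriction to $Y_t$ of the $\Delta$-flat sheaf $\mathcal F$, must split as $\bigoplus_j\mathcal O_{C^t_j}(-1)^{\mu_j}$ for some non-negative integers $\mu_j$, and $R^t\cong\prod_j\text{Mat}(\mu_j\times\mu_j)$. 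To pin down $\mu_j=m_j$, the Euler characteristics $\chi(\mathcal L_i\otimes\tilde F^t)=\sum_j\mu_j\,(\mathcal L_i,C^t_j)=\sum_j\mu_j\,m_{j,i}$ are constant in $t$ by $\Delta$-flatness; comparing with the value at $t=0$ coming from the structure of $\tilde F^0$ as an iterated extension of the simples $\mathcal O_{C_{i'}}(-1)$, together with the degeneration data $[C^t_j]\to\sum_i m_{j,i}[C_i]$, forces $\mu_j=\sum_i m_{j,i}=m_j$. Finally, part (2) yields $\dim R=\dim R^t=\sum_j m_j^2=\sum_d n_d d^2$.

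The hard part is (2): establishing flatness of the \emph{non-commutative} algebra $\mathcal R$ over $\Delta$ requires the fibrewise Ext-vanishing to propagate coherently across the family $\mathcal Y/\Delta$, that is, a relative version of the Van den Bergh/perverse tilting set-up of \cite{VdB}, \cite{perv}, which is precisely the content of the conjecture in \cite{Hua-Toda}. Once that is in hand, the numerical count in (3) drops out from the $(-1,-1)$-rigidity and the constancy of Euler characteristics in the family.
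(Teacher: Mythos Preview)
For (1) your outline matches the paper's, but the inference ``$\tilde f_*M$ is $\Delta$-flat, so $\tilde f^*\tilde f_*M$ is $\Delta$-flat'' is not automatic, since $\tilde f$ is not flat. What is actually needed is that $\tilde f^*$ applied to $0\to\tilde f_*M\xrightarrow{t}\tilde f_*M\to f_*M^0\to 0$ remain left-exact, i.e.\ that $L_1\tilde f^*(f_*M^0)=0$; the paper imports this from \cite{BB}~Lemma~3.4 and then runs the snake lemma. Once you insert that vanishing, your argument and the paper's coincide.

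Your treatment of (2) is where you diverge most, and in the wrong direction. You flag (2) as the hard step and appeal to a fiberwise vanishing $\text{Ext}^{\ge1}_{Y_t}(\mathcal F\vert_{Y_t},\mathcal F\vert_{Y_t})=0$ that you do not prove and that you yourself describe as essentially the open point. The paper's argument is much simpler and needs no such input: since $t:\mathcal F\to\mathcal F$ is injective by (1), $\mathcal R=\text{End}_{\mathcal Y}(\mathcal F)$ is $t$-torsion-free, hence flat over the DVR $\mathcal O_\Delta$; and $\mathcal R\otimes_{\mathcal O_\Delta} k\cong R$ reduces to the surjectivity of $\hat{\mathcal R}\to R$, which is immediate because both are base algebras of semi-universal NC deformations of the \emph{same} sheaf $F$, on $\hat{\mathcal Y}$ and on the closed subscheme $Y\subset\hat{\mathcal Y}$ respectively. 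So (2) is not the hard part; it drops out of (1) together with universality.

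For (3), your rigidity-plus-Euler-characteristic route has two gaps. First, you assume without justification that $\tilde F^t$ lies in the extension-closure of the $\mathcal O_{C^t_j}(-1)$; note that $\tilde F^t$ is \emph{not} the semi-universal deformation on $Y_t$ (for $(-1,-1)$-curves that would be just $\bigoplus_j\mathcal O_{C^t_j}(-1)$ with one copy each). Second, the linear system $\sum_j\mu_j m_{j,i}=c_i$ ($i=1,\dots,r$) is in general underdetermined, since the number of curves $C^t_j$ can exceed $r$. The paper avoids both issues by a direct local computation: on an analytic neighbourhood $V^t_j$ of each $(-1,-1)$-curve, $M_i$ splits as $(L^t_j)^{\oplus m_{j,i}}\oplus\mathcal O_{V^t_j}^{\oplus(\text{rank}\,M_i-m_{j,i})}$ (because $M_i$ is relatively generated with $R^1\tilde f_*M_i^*=0$), and a short diagram chase gives $\text{Ker}(\tilde f^*\tilde f_*L^t_j\to L^t_j)\cong\mathcal O_{C^t_j}(-1)$. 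This yields $\mathcal F_i\vert_{V^t_j}\cong\mathcal O_{C^t_j}(-1)^{\oplus m_{j,i}}$ on the nose, with no numerical comparison required.
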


\begin{proof}
(1) We have an exact sequence 
\[
0 \to M \to M \to M^0 \to 0
\]
where the first arrow is the multiplication by $t$.
Because $R^1\tilde f_*M = 0$, there is an exact sequence
\[
0 \to \tilde f_*M \to \tilde f_*M \to \tilde f_*M^0 \to 0.
\] 
Because $L_1\tilde f^*\tilde f_*M^0 = 0$ by \cite{BB} Lemma 3.4, we obtain the first row of the 
following commutative diagram
\[
\begin{CD}
0 @>>> \tilde f^*\tilde f_*M @>>> \tilde f^*\tilde f_*M @>>> \tilde f^*\tilde f_*M^0 @>>> 0 \\
@. @VVV @VVV @VVV \\
0 @>>> M @>>> M @>>> M^0 @>>> 0.
\end{CD}
\]
By snake lemma, we obtain 
\[
0 \to \mathcal F \to \mathcal F \to \tilde F^0 \to 0
\]
hence the flatness.

\vskip 1pc

(2) Since $t: \mathcal F \to \mathcal F$ is injective, $\mathcal R$ has no $t$-torsion.
Thus it is sufficient to prove that the natural homomorphism 
$\text{Hom}_{\mathcal Y}(\mathcal F,\mathcal F) \to \text{Hom}_Y(\tilde F^0,\tilde F^0)$ is surjective.
By the flat base change, it is also sufficient to prove that 
$\text{Hom}_{\hat{\mathcal Y}}(\hat{\mathcal F},\hat{\mathcal F}) \to \text{Hom}_Y(\tilde F^0,\tilde F^0)$ is surjective, i.e.,
$\hat{\mathcal R} \to R$ is surjective.
Then the assertion follows from the fact that 
$\hat{\mathcal R}$ and $R$ are the base algebras of NC semi-universal deformations of the same sheaf $F$ with 
$Y \subset \mathcal Y$.

\vskip 1pc

(3) This is proved in \cite{Toda} and \cite{Hua-Toda} when $r=1$. 
Let $x^t_j = \tilde f(C^t_j) \in X_t = p^{-1}(t)$ for $t \ne 0$.
Since $C^t_j$ is a $(-1,-1)$-curve, $x^t_j$ is an ordinary double point on a $3$-fold.
We take a small complex analytic neighborhood $x^t_j \in U^t_j \subset X_t$, and let $V^t_j = \tilde f^{-1}(U^t_j)$.

Let $L^t_j$ be a Cartier divisor on $V^t_j$ such that $(L^t_j,C^t_j) = 1$.
We know that $(\mathcal L_i,C^t_j) = m_{j,i}$ and $R^1\tilde f_*M_i^* = 0$. 
Since $C^t_j \cong \mathbf P^1$ and $M_i$ is relatively generated, $M_i \vert_{V^t_j}$ is a direct sum of line bundles
whose degrees are non-negative but at most $1$.
Since the total degree is equal to $m_{j,i}$, 
it follows that $M_i \vert_{V^t_j} = (L^t_j)^{\oplus m_{j,i}} \oplus \mathcal O_{V^t_j}^{\oplus (\text{rank}(M_i)-m_{j,i})}$.

We will prove that $\text{Ker}(\tilde f^*\tilde f_*L^t_j \to L^t_j) \cong \mathcal O_{C^t_j}(-1)$.
Indeed there is a commutative diagram 
\[
\begin{CD}
@. \tilde f^*\tilde f_*(L^t_j)^* @>>> \mathcal O_{V^t_j}^{\oplus 2} @>>> \tilde f^*\tilde f_*L^t_j @>>> 0 \\
@. @V{h_1}VV @V{\cong}VV @V{h_2}VV \\
0 @>>> (L^t_j)^* @>>> \mathcal O_{V^t_j}^{\oplus 2} @>>> L^t_j @>>> 0.
\end{CD}
\]
Hence $\text{Ker}(h_2) \cong \text{Coker}(h_1)$.
Since $(L^t_j)^* \otimes_{\mathcal O_{V^t_j}} I_{C^t_j}$ for the ideal sheaf $I_{C^t_j}$ of $C^t_j \subset V^t_j$ 
is generated by global sections, 
we have $\text{Coker}(h_1) \cong (L^t_j)^* \otimes \mathcal O_{C^t_j} \cong \mathcal O_{C^t_j}(-1)$.

Therefore $\mathcal F_i \vert_{V^t_j} = \mathcal O_{C^t_j}(-1)^{\oplus m_{j,i}}$.
Hence $\mathcal F \vert_{V^t_j} = \mathcal O_{C^t_j}(-1)^{\oplus m_j}$, and 
$\tilde F^t \cong \bigoplus_j \mathcal O_{C^t_j}(-1)^{m_j}$.
Thus $\text{End}_{Y_t}(\tilde F^t) \cong \prod_j \text{Mat}(m_j \times m_j)$, and the assertion is proved.
\end{proof}

\section{abstract description using $T^1$ and $T^2$}

We will describe the base algebra of the semi-universal NC deformation of a deformation functor $\Phi$ which
has the tangent space $T^1$ and the obstruction space $T^2$, which is defined below. 

Let $\Phi: (\text{Art}_r) \to (\text{Set})$ be an NC deformation functor 
which has a formal semi-universal deformation $\hat{\xi} \in \hat{\Phi}(\hat R)$.  
A $k^r$-bimodule $T^2 = \bigoplus_{i,j=1}^r T^2_{ij}$ is said to be the {\em obstruction space} 
if the following condition is satisfied.
Let $\xi \in \Phi(R)$ be an NC deformation over $(R,M) \in (\text{Art}_r)$, and 
let $(R',M') \in (\text{Art}_r)$ be an extension of $R$ by a two-sided ideal $J$:
\[
0 \to J \to R' \to R \to 0
\]
such that $M'J = 0$, so that $J$ is a left $k^r$-module.
Then there is an obstruction class $o_{\xi} \in T^2 \otimes_{k^r} J$ such that $\xi$ 
extends to an NC deformation $\xi' \in \Phi(R')$
if and only if $o_{\xi} = 0$.

We assume that the obstruction class is functorial in the following sense.
Let 
\begin{equation}\label{functorial}
\begin{CD}
0 @>>> J @>>> R' @>>> R @>>> 0 \\
@. @VgVV @V{f'}VV @VfVV \\
0 @>>> J_1 @>>> R'_1 @>>> R_1 @>>> 0
\end{CD}
\end{equation}
be a commutative diagram of such extensions.
Let $\xi \in \Phi(R)$ be an NC deformation, 
and let $\xi_1 = \Phi(f)(\xi) \in \Phi(R_1)$.
Let $o_{\xi} \in T^2 \otimes_{k^r} J$ and $o_{\xi_1} \in T^2 \otimes_{k^r} J_1$ be the 
obstructions classes of extending $\xi$ and $\xi_1$ over $R'$ and $R'_1$, respectively.
Then $o_{\xi_1} = g(o_{\xi})$.

\begin{Thm}\label{relations}
Let $\Phi: (\text{Art}_r) \to (\text{Set})$ be an NC deformation functor.
Assume that the obstruction space $T^2$ is finite dimensional.
Then there is a $k^r$-linear map $m: (T^2)^* \to \hat T_{k^r}(T^1)^*$ such that 
$\hat R \cong  \hat T_{k^r}(T^1)^*/(m((T^2)^*))$, a quotient algebra of the completed tensor algebra by a two-sided ideal 
generated by the image of $m$.
\end{Thm}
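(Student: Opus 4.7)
The plan is to realize $\hat R$ as a quotient of the completed free tensor algebra $\hat P := \hat T_{k^r}(T^1)^*$ by the two-sided ideal generated by a copy of $(T^2)^*$ encoding the universal obstruction. First, using the bijective Kodaira--Spencer isomorphism $\hat M/\hat M^2 \cong (T^1)^*$, I choose a $k^r$-bimodule section $\sigma:(T^1)^*\to\hat M$ and extend it, via the universal property of the completed tensor algebra, to a continuous augmentation-preserving $k^r$-algebra map $\pi:\hat P \to \hat R$. Since $\pi$ induces an isomorphism $\hat N/\hat N^2 \xrightarrow{\sim} \hat M/\hat M^2$, the NC Nakayama lemma applied at each finite truncation makes $\pi$ surjective. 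Set $K := \ker\pi \subseteq \hat N^2$ and $\bar K := K/(\hat N K + K\hat N)$.

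Next, form the pro-$(\mathrm{Art}_r)$-algebra $\widetilde R := \hat P/(\hat N K + K\hat N)$; truncating both sides modulo $\hat N^{n+1}$ gives, for each $n\ge 2$, a genuine small extension
\[
0 \to \bar K_n \to \widetilde R_n \to \hat R_n \to 0
\]
in $(\mathrm{Art}_r)$ whose kernel is square-zero on both sides. The obstruction axiom produces an obstruction class $\mu_n \in T^2 \otimes_{k^r}\bar K_n$ to lifting the truncated deformation $\hat\xi_n$. Functoriality of obstructions across the tower of truncations assembles these into an element $\mu \in T^2 \otimes_{k^r}\bar K$, which (using $\dim T^2<\infty$) corresponds to a $k^r$-bimodule map $\bar\mu:(T^2)^*\to\bar K$.

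The crux is showing $\bar\mu$ is surjective. Suppose $\bar K' := \bar K/\mathrm{im}(\bar\mu) \ne 0$. The quotient extension $0\to \bar K'\to \widetilde R/\mathrm{im}(\bar\mu)\to \hat R\to 0$ has obstruction the pushout image of $\mu$, which is zero by construction; hence $\hat\xi$ lifts to some $\hat\xi'\in\hat\Phi(\widetilde R/\mathrm{im}(\bar\mu))$. Versality of $\hat\xi$ provides a morphism $h:\hat R\to \widetilde R/\mathrm{im}(\bar\mu)$ with $\hat\Phi(h)(\hat\xi)=\hat\xi'$; composing with the projection gives $g:\hat R\to \hat R$ satisfying $\hat\Phi(g)(\hat\xi)=\hat\xi$, so bijectivity of the Kodaira--Spencer map forces $g$ to be the identity on $\hat M/\hat M^2$ and hence, by truncation-by-truncation Nakayama, an automorphism of $\hat R$. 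Replacing $h$ by $h\circ g^{-1}$ yields a splitting of $\widetilde R/\mathrm{im}(\bar\mu)\to\hat R$, forcing $\bar K'=0$ and contradicting the assumption. Finally, choose any $k^r$-bimodule lift $m:(T^2)^*\to K$ of $\bar\mu$; by surjectivity, $m((T^2)^*)$ together with $\hat N K+K\hat N$ spans $K$, and iterating this identity (equivalently, NC Nakayama for the sub-bimodule $K/(m((T^2)^*))\subseteq \hat P/(m((T^2)^*))$, combined with completeness of $\hat P$ in its augmentation filtration) promotes it to the equality of the closed two-sided ideal $(m((T^2)^*))$ with $K$. Hence $\hat R\cong \hat P/(m((T^2)^*))$.

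The main obstacle is the surjectivity step: packaging the abstract obstruction axiom coherently on the pro-object $\widetilde R$ via its truncations, and then using versality together with bijectivity of the Kodaira--Spencer map to kill any hypothetical nonzero cokernel $\bar K'$. Once this is established, the passage from $\bar\mu$ surjective to the identification of two-sided ideals is routine NC Nakayama.
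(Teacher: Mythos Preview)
Your proposal is correct and follows essentially the same approach as the paper: both identify the relations in $\hat I=\ker(\hat T_{k^r}(T^1)^*\to\hat R)$ with the components of the obstruction class for the ``universal'' small extension of $\hat R$ (the paper uses the one-sided quotient by $\hat M\hat I$ and works explicitly level by level, whereas you use the two-sided $\hat N K+K\hat N$ and phrase things at the pro-level), and then invoke semi-universality together with NC Nakayama to conclude these obstruction elements generate. One small point to make explicit in your surjectivity step: a mere algebra section does not kill the kernel of a small extension in general---what you actually use is that your section $h$ induces the identity on $(T^1)^*$, hence is surjective by Nakayama, so the projection is injective and $\bar K'=0$.
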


\begin{proof}
Denote $\hat A = \hat T_{k^r}(T^1)^* = k^r \oplus \hat M$.
Then the base algebra of the semi-universal NC deformation $\hat R$ is a quotient algebra 
$\hat A/\hat I$ by some two-sided ideal $\hat I$.
Let $\{z_l\}_{l=1}^N$ be a $k$-basis of $T^2$.

Let $R_k = \hat A/(\hat I+\hat M^{k+1})$.
We define a sequence of two-sided ideals $I_k \subset \hat A/\hat M^{k+1}$ 
by $R_k = \hat A/(I_k + \hat M^{k+1})$.
By definition of the semi-universal deformation, there is an NC deformation $\xi_k \in \Phi(R_k)$.
We will prove that $I_k$ is generated by elements $\{s_{k,l}\}_{l=1}^N \in \hat A/\hat M^{k+1}$ such that 
$s_{k+1,l} \mapsto s_{k,l}$ by the natural map $\hat A/\hat M^{k+2} \to \hat A/\hat M^{k+1}$ inductively as follows. 

We set $s_{1,l} = 0$ for all $l$, because $I_1 = 0$ and $R_1 = \hat A/\hat M^2$.

Let $k$ be an arbitrary integer, and let $R = R_k$, 
$R' = \hat A/(\hat M \hat I+\hat M^{k+1})$ and $J = (\hat I+\hat M^{k+1})/(\hat M \hat I+\hat M^{k+1})$.
Then $R = R'/J$ and $M'J = 0$ for $M' = \hat M/(\hat M \hat I+\hat M^{k+1})$.
We write the obstruction of extending $\xi_k$ to $R'$ as $o_{\xi_k} = \sum_l z_l \otimes s_{k,l} \in T^2 \otimes_{k^r} J$, where 
$s_{k,l} \in J$.

We have a commutative diagram
\[
\begin{CD}
0 @>>> J @>>> R' @>>> R @>>> 0 \\
@. @VVV @VVV @V=VV \\
0 @>>> J/(s_{k,l}) @>>> R'/(s_{k,l}) @>>> R @>>> 0
\end{CD}
\]
By the functoriality of the obstruction class, the obstruction class of the lower sequence vanishes, and
$\xi_k$ is extendible to $R'/(s_{k,l})$.
By the semi-universality, it follows that 
\[
\hat I+\hat M^{k+1} = (s_{k,l}) + \hat M \hat I + \hat M^{k+1}.
\]
By Nakayama's lemma, we have $\hat I + \hat M^{k+1} = (s_{k,l}) + \hat M^{k+1}$. 
Thus we can write $I_k = (s_{k,l})_{l=1}^N$ as a two-sided ideal in $\hat A/\hat M^{k+1}$.

Here we use a following version of Nakayama's lemma.
Let $(A,M) \in (\text{Art}_r)$ and $I$ a two-sided ideal.
Assume that there are elements $h_i \in I$ such that $I = MI + (h_i)$.
Then $I = (h_i)$.
Indeed let $\bar I = I/(h_i) \subset \bar A = A/(h_i)$.
Then $\bar I = M\bar I$.
Since $M$ is nilpotent, $\bar I = M\bar I = \dots = M^m \bar I = 0$ for some $m$.

Now we have a commutative diagram 
\[
\begin{CD}
\hat A/(\hat M\hat I+\hat M^{k+2}) @>>> \hat A/(\hat I+\hat M^{k+2}) \\
@VVV @VVV \\
\hat A/(\hat M\hat I+\hat M^{k+1}) @>>> \hat A/(\hat I+\hat M^{k+1})
\end{CD}
\]
Then the obstruction for the extension on the first line $o_{\xi_{k+1}} = \sum z_l \otimes s_{k+1,l}$ for 
$s_{k+1,l} \in (\hat I+\hat M^{k+2})/(\hat M \hat I+\hat M^{k+2})$
is mapped to $o_{\xi_k} = \sum z_l \otimes s_{k,l}$.
Hence we have $s_{k+1,l} + \hat M^{k+1} = s_{k,l} + \hat M^{k+1}$.
Thus we can define $s_l \in \hat I$ such that $s_l + \hat M^{k+1} = s_{k,l} + \hat M^{k+1}$ for all $k$. 
Then the $s_l$ generate $\hat I$. 
\end{proof}


Graduate School of Mathematical Sciences, University of Tokyo,
Komaba, Meguro, Tokyo, 153-8914, Japan. 

kawamata@ms.u-tokyo.ac.jp

\end{document}